\newtheorem{theorem}{Theorem}
\newtheorem{lemma}[theorem]{Lemma}
\newtheorem{claim}[theorem]{Claim}
\newcommand{\ff}{\mathcal{F}}
\begin{document}

	\title[An upper bound for the size of a $k$-uniform intersecting family]{An upper bound for the size of a $k$-uniform intersecting family with covering number $k$.}
	\author[A.Arman]{Andrii Arman}
	\address{Department of mathematics, University of Manitoba, Winnipeg, Manitoba R3T 2N2, Canada}
	\email{armana@myumanitoba.ca}
	\thanks{}
	
	\author[T.Retter]{Troy Retter}
	\address{Department of Mathematics and Computer Science, 
		Emory University, Atlanta, GA 30322, USA}
	\email{tretter@emory.edu}
	\thanks{}
	
	\keywords{}


	
	
	

	\begin{abstract}
		Let $r(k)$ denote the maximum number of edges in a $k$-uniform intersecting 
		family with covering number $k$. Erd\H{o}s and Lov\'asz proved that $ \lfloor k! (e-1) \rfloor \leq r(k) \leq k^k.$ Frankl, Ota, and Tokushige improved the lower bound to $r(k) \geq \left( k/2 \right)^{k-1}$, and Tuza improved the upper bound to $r(k) \leq (1-e^{-1}+o(1))k^k$. We establish that $ r(k) \leq (1 + o(1)) k^{k-1}$. 
	\end{abstract}
	\maketitle
	\section{Introduction}
	
	Let $X$ be a finite set and $k$ be a positive integer. A family of sets $\mathcal{F} \subseteq {\binom{X}{k}}$ is called a $k$-uniform hypergraph, or a $k$-uniform family. The hypergraph $\mathcal{F}$ is intersecting if all $e_1, e_2 \in \mathcal{F}$ satisfy $e_1 \cap e_2 \not = \emptyset$. A set $C \subseteq X$ is called a cover of $\mathcal{F}$ if every $f \in \mathcal{F}$ satisfies $f \cap C \not = \emptyset$. The covering number of $\ff$, denoted by $\tau(\mathcal{F})$, is define by $\tau (\mathcal{F}) := \min\{|C| : C \text{ is a cover of }\mathcal{F}\}$.
	
	Define
	\begin{align}
		r(k) := \max \{|\mathcal{F}| : \mathcal{F} \text{ is} \, k\text{-uniform, intersecting, and} \, \tau(\mathcal{F}) = k  \}, \nonumber
	\end{align}
	where no restriction is placed upon the size of the vertex set $X$.
	
	In 1975, Erd\H{o}s and Lov\'asz \cite{erdos_lovasz} proved that
	\begin{align}
		\lfloor k! (e-1) \rfloor \leq r(k) \leq k^k. \nonumber
	\end{align}
	In 1994, Tuza \cite{tuza} improved the upper bound, and in 1996, Frankl, Ota, and Tokushige \cite{frankl} improved the lower bound. It follows from these result that
	\begin{align}
		\left( \frac{k}{2} \right)^{k-1} \leq  r(k) \leq  (1-e^{-1}+o(1))k^k. \nonumber
	\end{align}
	
	Our main result is an improved upper bound. This will be established by using the following two lemmas, which will be proved in Sections \ref{section:main} and \ref{section:theorem:best}, respectively. The first lemma is based upon the degree of a vertex $x \in X$, denoted $d(x)$, which is the number of elements in $\ff$ that contain $x$.
	
	\begin{lemma} \label{theorem:main}
		Let $\ff$ be a $k$-uniform intersecting family on $X$ with covering number $k$. If $x \in X$ satisfies $d(x) \geq (\log k)k^{k-2}$, then
		\begin{align}
			|\{f \in \ff : f \not \ni x \}| = o(k^{k-1}). \nonumber
		\end{align}
	\end{lemma}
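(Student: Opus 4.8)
The plan is to split $\ff$ according to whether an edge contains $x$ and to exploit a transversal duality between the two parts. Write $\ff_x=\{f\in\ff:x\in f\}$ and $\ff_{\bar x}=\ff\setminus\ff_x$, so that $|\ff_x|=d(x)\geq(\log k)k^{k-2}$, and let $L_x=\{f\setminus\{x\}:f\in\ff_x\}$ be the link of $x$, a $(k-1)$-uniform family with $|L_x|=d(x)$ distinct edges. If $h\in\ff_{\bar x}$ and $f\in\ff_x$ then $h\cap f\neq\emptyset$ while $x\notin h$, so $h\cap(f\setminus\{x\})\neq\emptyset$; hence every edge of $\ff_{\bar x}$ is a $k$-element transversal of $L_x$, and symmetrically every edge of $L_x$ is a $(k-1)$-element cover of $\ff_{\bar x}$. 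We may assume $\ff_{\bar x}\neq\emptyset$, as otherwise $\ff$ is a star and $\tau(\ff)=1\neq k$.

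Next I would pin down $\tau(\ff_{\bar x})$. Since $\{x\}$ covers $\ff_x$, adjoining $x$ to any minimum cover of $\ff_{\bar x}$ produces a cover of $\ff$, so $\tau(\ff_{\bar x})\geq\tau(\ff)-1=k-1$; on the other hand any edge of the non-empty family $L_x$ is a $(k-1)$-element cover of $\ff_{\bar x}$, so $\tau(\ff_{\bar x})\leq k-1$. Thus $\ff_{\bar x}$ is a $k$-uniform intersecting family with $\tau(\ff_{\bar x})=k-1$, every edge of $L_x$ is a \emph{minimum} cover of $\ff_{\bar x}$, and so $\ff_{\bar x}$ has at least $(\log k)k^{k-2}$ minimum covers. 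Moreover $L_x$ itself is heavily constrained by $\tau(\ff)=k$: for each $f=\{x\}\cup e\in\ff_x$ the set $e=f\setminus\{x\}$ of size $k-1$ is not a cover of $\ff$, so some $g\in\ff$ misses $e$ and hence (meeting $f$) contains $x$, giving an edge of $L_x$ disjoint from $e$; similarly, no vertex lies in every edge of $L_x$ (a vertex $v$ in all edges of $\ff_x$ would make $f\setminus\{x\}$ a cover obstruction fail), so $L_x$ is ``spread out''.

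It then remains to bound $|\ff_{\bar x}|=o(k^{k-1})$. I would fix one minimum cover $e_0=\{c_1,\dots,c_{k-1}\}\in L_x$ of $\ff_{\bar x}$, partition $\ff_{\bar x}=\bigcup_{i=1}^{k-1}\mathcal G_i$ with $\mathcal G_i=\{h:c_i\in h,\ c_j\notin h\text{ for }j<i\}$, and show $|\mathcal G_i|=o(k^{k-2})$. For this, write $d(x,v)$ for the number of edges of $L_x$ through $v$; since each $h\in\ff_{\bar x}$ meets all of $L_x$ we get $\sum_{v\in h}d(x,v)\geq|L_x|=d(x)$, and since $\sum_v d(x,v)=(k-1)d(x)$ only $O(k^2)$ vertices $v$ satisfy $d(x,v)\geq d(x)/k$, while every edge of $\ff_{\bar x}$ passes through one of them. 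One then peels these popular co-neighbours off one at a time: an edge of $\ff_{\bar x}$ through such a $v$, with $v$ removed, is a transversal of the still-large, still-spread-out $(k-1)$-uniform family $\{C\in L_x:v\notin C\}$ (large because no vertex is almost-universal in $L_x$), and a large highly-spread $(k-1)$-uniform family has few transversals; iterating over the $O(k)$ peeling steps yields the bound, the factor $\log k$ in the hypothesis providing exactly the slack needed to absorb the union/probabilistic estimate at this stage.

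The main obstacle is precisely this last step: bounding the number of $k$-element transversals of $L_x$ with no a priori control on the ground set, since a near-star would have far too many transversals. This is why one must repeatedly return to $\tau(\ff)=k$ — equivalently, to the facts that $L_x$ has no (almost-)universal vertex and that every edge of $L_x$ has a disjoint mate in $L_x$ — to force $L_x$ to be spread out enough that the transversal count, and hence $|\ff_{\bar x}|$, is $o(k^{k-1})$.
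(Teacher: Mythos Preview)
Your setup is correct: the link $L_x$, the observation that each $h\in\mathcal{F}_{\bar x}$ is a $k$-transversal of $L_x$, and $\tau(\mathcal{F}_{\bar x})=k-1$ are all fine. The gap is in the final step, where the actual counting must happen. You assert that ``a large highly-spread $(k-1)$-uniform family has few transversals'' and invoke a vaguely described peeling iteration, but that slogan is essentially a restatement of what you are trying to prove, and you supply no mechanism. Concretely: you never say what ``spread out'' means quantitatively, you never state the invariant your iteration maintains, you switch without explanation from the partition into the $\mathcal G_i$ to a separate popular-vertex argument, and ``$O(k)$ peeling steps'' is inconsistent with your own count of $O(k^2)$ popular vertices. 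Your justification that $L_x$ has no universal vertex is also garbled (the real reason is $d(\{x,v\})\le k^{k-2}<d(x)$ by the pair-degree bound), and in any case ``no universal vertex'' is far too weak a spread condition to control transversal counts.

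The paper takes a different route: it encodes each edge $e\not\ni x$ by a sequence $(\omega_1,\dots,\omega_k)\in[k]^k$ via a Guesser--Chooser game, and uses the hypothesis $d(x)\ge(\log k)k^{k-2}$ to arrange that the first $t=\lfloor\log k\rfloor$ entries are \emph{non-decreasing}. Guesser seeds the process with $U_0=\{x\}$ and maintains, after round $i$, sets $V_i\subset e$, $U_i\subset X\setminus e$ and a subfamily $\mathcal{F}_i=\{f:f\supseteq U_i,\ f\cap V_i=\emptyset\}$ of size at least $(\log k-i)\,k^{k-\omega_i}$; the $\log k$ factor in the hypothesis is exactly what keeps $\mathcal{F}_i$ nonempty through $t$ rounds, and the testing edge built from $\mathcal{F}_{i-1}$ is what forces $\omega_i\ge\omega_{i-1}$. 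Since there are only $\binom{k+t-1}{t}$ non-decreasing sequences in $[k]^t$, this yields $|\mathcal{F}_{\bar x}|\le\binom{k+t-1}{t}\,k^{k-t}=o(k^{k-1})$. If you want to salvage your approach, you would have to turn the peeling into an explicit encoding of this kind, with a precise invariant showing how the $\log k$ slack is spent step by step; as written, that work is missing.
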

	
	The next lemma is based upon the maximum degree of a hypergraph $\ff$ on $X$, which is defined by $\Delta(\ff) := \max \{ d(x) : x \in X\}$.
	
	\begin{lemma} \label{theorem:max_deg}
		Let $\mathcal{F}$ be a $k$-uniform intersecting family on $X$ with covering number $k$. Let $\alpha \in \mathbb{R}^+$. If $\Delta(\ff) \leq | \mathcal{F} | / 40 \alpha \log k$,
		then for $k$ sufficiently large
		\begin{align}
			|\ff| \leq \max \{ 2 k^{2k/3} ,  e k^{k-\alpha} \}. \nonumber
		\end{align}
	\end{lemma}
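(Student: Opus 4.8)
The argument rests on the link/covering recursion behind the Erd\H{o}s--Lov\'asz bound $|\ff|\le k^k$, refined to exploit the degree hypothesis.

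\emph{Step 1: local degree bounds.} For $S\subseteq X$ write $d(S)=|\{f\in\ff:S\subseteq f\}|$, so that $d(\{x\})=d(x)$ and $d(\emptyset)=|\ff|$. I would first record that $d(S)\le k^{\,k-|S|}$ for every $S$ with $|S|\le k$, proved by downward induction on $|S|$. The case $|S|=k$ is trivial. If $|S|<k=\tau(\ff)$ then $S$ is not a cover, so some $g\in\ff$ satisfies $g\cap S=\emptyset$; since $\ff$ is intersecting, $g$ meets every $f\in\ff$ with $S\subseteq f$ in a point outside $S$, whence $\ff_S:=\{f\in\ff:S\subseteq f\}=\bigcup_{v\in g}\ff_{S\cup\{v\}}$ and $d(S)\le\sum_{v\in g}d(S\cup\{v\})\le k\cdot k^{\,k-|S|-1}$. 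Taking $S=\emptyset$ recovers $|\ff|\le k^k$.

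\emph{Step 2: a covering chain and a telescoping bound.} Build greedily a chain $\emptyset=S_0\subsetneq S_1\subsetneq\cdots\subsetneq S_k$ with $|S_i|=i$: given $S_i$, since $|S_i|<k$ some edge is disjoint from $S_i$, so $\ff_{S_i}$ has a cover contained in $X\setminus S_i$; let $C_i\subseteq X\setminus S_i$ be a smallest such cover and set $\tau_i:=|C_i|\le k$. As every $f\in\ff_{S_i}$ meets $C_i$, we have $\ff_{S_i}=\bigcup_{v\in C_i}\ff_{S_i\cup\{v\}}$; choose $x_{i+1}\in C_i$ maximizing $d(S_i\cup\{x_{i+1}\})$ and put $S_{i+1}=S_i\cup\{x_{i+1}\}$, so that $|\ff_{S_i}|\le\tau_i\,|\ff_{S_{i+1}}|$. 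Since $|\ff_{S_k}|\le1$, iterating yields
\[
|\ff|\ \le\ \prod_{i=0}^{k-1}\tau_i .
\]

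\emph{Step 3: using that $\Delta(\ff)$ is small --- the crux.} Each link $\ff_{S_i}$ has maximum degree at most $\Delta(\ff)\le|\ff|/(40\alpha\log k)$, while Step 2 gives $|\ff_{S_i}|\ge|\ff|/\prod_{j<i}\tau_j$; note that a spread-out family is hard to cover, so one cannot simply make the individual $\tau_i$ small. The plan is instead to show that, \emph{unless $|\ff|\le 2k^{2k/3}$}, the degree hypothesis forces $\prod_i\tau_i$ to fall short of $k^k$ by a factor $k^{\alpha}$. Concretely, one analyses the branching at each stage $i$ over a random choice of the cover vertex inside $C_i$ and argues that a total ``deficiency'' of at least $\alpha$ (measured in $\log_k$) must be accumulated along the chain; the $\log k$ in the hypothesis is exactly what lets a Chernoff/union-bound estimate survive the at most $k$ stages (respectively the $|\ff|$ edges), and the truncation threshold $2k^{2k/3}$ absorbs the tail stages $i\gtrsim 2k/3$, where the bound $d(S_i)\le k^{\,k-i}$ of Step 1 already improves on the trivial $\tau_i\le k$ and the recursion is cut off. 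The constant $e$ in $e\,k^{k-\alpha}$ is the product of the per-stage $(1+o(1))$ error factors. (If one prefers, one may first dispose of the case $\Delta(\ff)\ge(\log k)k^{k-2}$ using Lemma~\ref{theorem:main}, which already forces $|\ff|\le\Delta(\ff)+o(k^{k-1})\le|\ff|/(40\alpha\log k)+o(k^{k-1})$ and hence $|\ff|=o(k^{k-1})$; the substance is the remaining case $\Delta(\ff)<(\log k)k^{k-2}$, which only gives $|\ff|\le k\Delta(\ff)<(\log k)k^{k-1}$ and must be sharpened to $e\,k^{k-\alpha}$.)

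\emph{Main obstacle.} Steps 1--2 are mere bookkeeping of Erd\H{o}s--Lov\'asz; the whole content is Step 3, i.e.\ converting ``$\Delta(\ff)$ small'' into an honest saving of a factor $k^{\alpha}$ in the telescoping product rather than a reproof of $|\ff|\le k^k$. The delicate point is that one must control the covering numbers $\tau_i$ of \emph{all} the links $\ff_{S_i}$ along the chain simultaneously, and it is this simultaneity that pins down the precise constants (the $40$, the $\log k$, and the $k^{2k/3}$ cutoff). A further nuisance is that the links $\ff_{S_i}$ are themselves not intersecting, so the recursion must stay anchored to the ambient family $\ff$ and cannot simply be phrased as an induction on $k$.
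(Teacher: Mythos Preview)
Your Steps 1 and 2 are fine bookkeeping, but Step 3 is not a proof: it is a wish list. You never say \emph{how} the hypothesis $\Delta(\ff)\le|\ff|/(40\alpha\log k)$ produces any saving in $\prod_i\tau_i$; ``analyse the branching at each stage over a random choice \ldots Chernoff/union bound'' is a slogan, not an argument, and there is no reason the covering numbers $\tau_i$ of the links should be small (they can all equal $k$). Your reading of the threshold $2k^{2k/3}$ as ``absorbing the tail stages $i\gtrsim 2k/3$'' is also off: the $2k/3$ in the exponent has nothing to do with a stage index.

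The paper's mechanism is entirely different from controlling $\tau_i$. One asks the first $t=20\lfloor\alpha\log k\rfloor$ questions in \emph{pairs} and arranges that, within each pair, the answers $(\omega_i,\omega_{i+1})$ cannot range freely over $[k]^2$: whenever $\omega_i>2k/3$ one forces $\omega_{i+1}>k/3$, so each pair lives in a set of size $<(8/9+o(1))k^2<e^{-1/10}k^2$, whence $|\ff|\le(e^{-1/10}k^2)^{t/2}k^{k-t}=e^{-t/20}k^k\le e\,k^{k-\alpha}$. The construction of such a pair uses both hypotheses in exactly the places you could not locate: the degree bound guarantees that after revealing at most $t$ vertices $V_{i-1}$ of $e$, the family $\ff_{i-1}=\{f\in\ff:f\cap V_{i-1}=\emptyset\}$ still has $|\ff_{i-1}|\ge|\ff|-t\,\Delta(\ff)\ge|\ff|/2$ (this is precisely why $40=2t/(\alpha\log k)$ appears); the assumption $|\ff|>2k^{2k/3}$ then ensures $|\ff_{i-1}|\ge k^{2k/3}$, which via Lemma~\ref{lemma:shat_from_s} lets one greedily build a set $S$ of $\lfloor k/3\rfloor$ vertices with $d_{\ff_{i-1}}(S)>0$. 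By Lemma~\ref{lemma:min_d(u)} the set $P$ of vertices lying in \emph{every} edge of $\ff_{i-1}$ through $S$ has $|P|\le k/3$, so one can label a test edge $e_i\supseteq S$ with $S$ in positions $1,\ldots,\lfloor k/3\rfloor$ and $P$ inside positions $\lfloor k/3\rfloor+1,\ldots,\lfloor 2k/3\rfloor$. If the answer $\omega_i$ exceeds $2k/3$, then $e$ misses $S$ and the revealed vertex lies outside $P$, so some \emph{other} edge $e_{i+1}\supseteq S$ avoids it; using $e_{i+1}$ (again with $S$ in the first $\lfloor k/3\rfloor$ slots) forces $\omega_{i+1}>k/3$. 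This pairing trick is the missing idea; your chain-of-covers framework does not reach it.
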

	
	Together, these two lemmas will be used to prove our main result.
	
	\begin{theorem} \label{theorem:best}
		The function $r(k)$ satisfies
		\begin{align}
			r(k) \leq (1+o(1))k^{k-1}. \nonumber
		\end{align}
	\end{theorem}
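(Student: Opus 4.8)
The plan is to case-split on the size of $\Delta(\ff)$ relative to the threshold $(\log k)k^{k-2}$ appearing in Lemma~\ref{theorem:main}: in the high-degree case I would invoke Lemma~\ref{theorem:main}, and in the low-degree case Lemma~\ref{theorem:max_deg}. The one ingredient needed beyond the two lemmas is the sharp single-vertex degree bound $d(x)\le k^{k-1}$, valid for every $x\in X$ whenever $\ff$ is $k$-uniform, intersecting and $\tau(\ff)=k$. I would prove this by the standard Erd\H{o}s--Lov\'asz-type iteration: set $S_0=\{x\}$ and $N_0=d(x)$; having built $S_{j-1}=\{x,v_1,\dots,v_{j-1}\}$ lying in $N_{j-1}$ edges of $\ff$, use that $|S_{j-1}|=j<k=\tau(\ff)$ to find an edge $B_j\in\ff$ disjoint from $S_{j-1}$; every edge containing $S_{j-1}$ meets $B_j$, so by pigeonhole some $v_j\in B_j$ lies in at least $N_{j-1}/k$ of them, and we put $S_j=S_{j-1}\cup\{v_j\}$, so $N_j\ge N_{j-1}/k$. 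After $k-1$ steps $|S_{k-1}|=k$, hence $S_{k-1}$ is contained in at most one edge; therefore $1\ge N_{k-1}\ge d(x)/k^{k-1}$.

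Case 1: $\Delta(\ff)\ge(\log k)k^{k-2}$. Choose $x$ with $d(x)=\Delta(\ff)$. Lemma~\ref{theorem:main} applies to $x$ and gives $|\{f\in\ff:f\not\ni x\}|=o(k^{k-1})$, so together with the degree bound above,
\[
|\ff|=d(x)+|\{f\in\ff:f\not\ni x\}|\le k^{k-1}+o(k^{k-1})=(1+o(1))k^{k-1}.
\]

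Case 2: $\Delta(\ff)<(\log k)k^{k-2}$. Here I would apply Lemma~\ref{theorem:max_deg} with the fixed constant $\alpha=2$. If $|\ff|\ge 80(\log k)^2k^{k-2}$, then $\Delta(\ff)<(\log k)k^{k-2}\le|\ff|/(80\log k)=|\ff|/(40\alpha\log k)$, so the hypothesis of Lemma~\ref{theorem:max_deg} is met and forces $|\ff|\le\max\{2k^{2k/3},ek^{k-2}\}$; for large $k$ the right-hand side equals $ek^{k-2}<80(\log k)^2k^{k-2}$, contradicting the assumption. Hence $|\ff|<80(\log k)^2k^{k-2}=o(k^{k-1})$, which is comfortably within the claimed bound. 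Combining the two cases yields $r(k)\le(1+o(1))k^{k-1}$.

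Given the two lemmas, the remaining argument is essentially bookkeeping and I do not expect a serious obstacle; the points that require care are (i) lining up the thresholds so that the hypothesis of Lemma~\ref{theorem:max_deg} is genuinely satisfied in Case~2 — in particular one must take a fixed $\alpha>1$, since $\alpha=1$ would only give $ek^{k-1}$ — and (ii) using the degree bound in its sharp form $d(x)\le k^{k-1}$ in Case~1, since any weaker constant there would spoil the leading constant $1$ in the final estimate. All of the genuine difficulty is concentrated in Lemmas~\ref{theorem:main} and~\ref{theorem:max_deg} themselves.
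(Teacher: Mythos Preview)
Your proposal is correct and follows essentially the same approach as the paper: the same case split on whether $\Delta(\ff)\ge(\log k)k^{k-2}$, with Lemma~\ref{theorem:main} plus the sharp bound $d(x)\le k^{k-1}$ (which is exactly the case $u=1$ of Lemma~\ref{lemma:min_d(u)}) handling the high-degree case, and Lemma~\ref{theorem:max_deg} handling the low-degree case. The only cosmetic difference is in Case~2: the paper applies Lemma~\ref{theorem:max_deg} with the growing parameter $\alpha=k/(40\log^2 k)$ under the contradiction hypothesis $|\ff|>k^{k-1}$, whereas you use the fixed $\alpha=2$ under the hypothesis $|\ff|\ge 80(\log k)^2 k^{k-2}$; both choices work and yield $|\ff|=o(k^{k-1})$ in that case.
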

	
	\begin{proof}
		Let $\mathcal{F}$ be a $k$-uniform intersecting family on $X$ with covering number $k$. We consider two cases.
		
		If $\Delta(\ff) \geq  (\log k)k^{k-2}$, let $x \in X$ be a vertex with $d(x) \geq  (\log k)k^{k-2}$. A simple observation (which follows from Lemma~\ref{lemma:min_d(u)}), is that any $k$-uniform intersecting family $\ff$ with covering number $k$ satisfies $\Delta(\ff) \leq k^{k-1}$. From this observation and Lemma~\ref{theorem:main},
		\begin{align}
			|\ff| \leq d(x) + |\{f \in \ff : f \not \ni x \}| \leq  k^{k-1} + o(k^{k-1}), \nonumber
		\end{align}
		as desired.
		
		In the complementary case $\Delta(\ff) <  (\log k)k^{k-2}$, we proceed by contradiction. That is, assume that $\Delta(\ff) <  (\log k)k^{k-2}$ and that $|\ff| > k^{k-1}$. For $\alpha = k / 40 \log^2 k$, we have that
		\begin{align}
			\Delta(\ff) <  (\log k)k^{k-2} \leq  \frac{|\ff|}{40 \alpha \log k}, \nonumber
		\end{align}
		and hence Lemma~\ref{theorem:max_deg} gives that for $k$ sufficiently large
		\begin{align}
			|\ff| \leq \max \left\{2k^{2k/3}, \; e k^{k-\alpha} \right\} < k^{k-1}, \nonumber
		\end{align}
		completing the proof.
	\end{proof}

	Lemmas \ref{theorem:main} and \ref{theorem:max_deg} will be established in Sections \ref{section:main} and \ref{section:theorem:best}, respectively. The next section will introduce some notation, a pair of general lemmas, and a Guesser-Chooser game upon which the proofs of Lemmas \ref{theorem:main} and \ref{theorem:max_deg} will be based. 
	
	\section{Preliminaries} \label{section:preliminaries}
	
	We will use the following notation. For $\widetilde{\ff} \subseteq \ff$ and $S \subset X$, the degree of $S$ in $\widetilde{\mathcal{F}}$, denoted by $d_{\widetilde{\ff}}(S)$, is defined by  $d_{\widetilde{\ff}}(S) := | \{f \in \widetilde{\ff} : f \supseteq S\}|$. We also take $d(S):=d_{\ff}(S)$.  For integers $i$ and $j$, let $[i]  := \{1,2, \dots ,i \}$, let $[i,j] := [j] \setminus [i-1]$, and let $(i,j] := [j] \setminus [i]$. We write $(\log k - i)$ to stand for $(\log(k) - i)$.
	
	We begin by establishing the following lemma.
	\begin{lemma} \label{lemma:shat_from_s}
		Let $\ff$ be a $k$-uniform intersecting family on $X$ with covering number $k$ and let $\widetilde{\ff} \subseteq \ff$. Let $j \in [k]$ and let $S_{j-1}$ by any subset of $X$ with size $|S_{j-1}|=j-1$. Then there exists $S_{j} = \{s_j\} \cup S_{j-1}$ with $|S_j|=j$ such that 
		\begin{align}
			d_{\widetilde{\ff}}(S_j) \geq k^{-1} \cdot d_{\widetilde{\ff}}(S_{j-1}).
			\nonumber
		\end{align}
	\end{lemma}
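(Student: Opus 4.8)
The plan is a one-step averaging argument driven by the covering hypothesis. First I would dispose of the degenerate case: if $d_{\widetilde{\ff}}(S_{j-1}) = 0$, then the desired inequality $d_{\widetilde{\ff}}(S_j) \geq k^{-1} d_{\widetilde{\ff}}(S_{j-1}) = 0$ holds for an arbitrary choice of $s_j \notin S_{j-1}$, so assume from now on that $d_{\widetilde{\ff}}(S_{j-1}) \geq 1$ and set $\mathcal{G} := \{ f \in \widetilde{\ff} : f \supseteq S_{j-1} \}$, a nonempty family of size $d_{\widetilde{\ff}}(S_{j-1})$.

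The crucial observation is that $|S_{j-1}| = j - 1 \leq k - 1 < k = \tau(\ff)$, so $S_{j-1}$ is \emph{not} a cover of $\ff$; hence there exists an edge $g \in \ff$ with $g \cap S_{j-1} = \emptyset$. Since $\ff$ is intersecting, every $f \in \mathcal{G}$ satisfies $f \cap g \neq \emptyset$, and because $g$ is disjoint from $S_{j-1} \subseteq f$, any vertex of $f \cap g$ lies in $g$ and outside $S_{j-1}$. Thus the $k$ vertices of $g$ cover the family $\mathcal{G}$, so by pigeonhole some vertex $s_j \in g$ is contained in at least $|\mathcal{G}|/k = k^{-1} d_{\widetilde{\ff}}(S_{j-1})$ of the edges of $\mathcal{G}$. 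Setting $S_j := S_{j-1} \cup \{s_j\}$, we have $s_j \notin S_{j-1}$ (as $s_j \in g$), so $|S_j| = j$, and $d_{\widetilde{\ff}}(S_j) \geq k^{-1} d_{\widetilde{\ff}}(S_{j-1})$, as required.

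I do not expect a genuine obstacle here, as this is a warm-up lemma; the only two points needing care are that $s_j$ be genuinely new (automatic, since it is chosen inside an edge disjoint from $S_{j-1}$) and that the strict inequality $j - 1 < \tau(\ff)$ be used to guarantee $S_{j-1}$ is not itself a cover, which is exactly what supplies the auxiliary edge $g$.
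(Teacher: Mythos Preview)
Your argument is correct and essentially identical to the paper's: both use $|S_{j-1}|<k=\tau(\ff)$ to find an edge $g\in\ff$ disjoint from $S_{j-1}$, then average (pigeonhole) over the $k$ vertices of $g$ using the intersecting property to locate $s_j$. Your version is slightly more explicit about the degenerate case and about $s_j\notin S_{j-1}$, but the proof is the same.
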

	
	\begin{proof}
		Since $\ff$ has covering number $k$ and $|S_{j-1}|<k$, there is an edge $f \in \ff$ such that $f \cap S_{j-1} =\emptyset$. Because $\ff$ is an intersecting family, 
		\begin{align}
			\sum_{x \in f}d_{\widetilde{\ff}}(S_{j-1}\cup{x})\geq d_{\widetilde{\ff}}(S_{j-1}). \nonumber
		\end{align}
		Therefore, for some $\widetilde{x} \in f$, we have $d_{\widetilde{\ff}}(S_{j-1}\cup {\widetilde{x}}) \geq k^{-1} \cdot d_{\widetilde{\ff}}(S_{j-1})$. It suffices to take $s_j:=\widetilde{x}$.
	\end{proof}
	
	We will also make use of the following lemma.
	
	\begin{lemma} \label{lemma:min_d(u)}
		Let $\mathcal{F}$ be a $k$-uniform intersecting family on $X$ with covering number $k$. If $U \subset X$ with $|U|=u$, then $d(U) \leq k^{k-u}$.
	\end{lemma}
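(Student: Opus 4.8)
The plan is to prove the inequality by downward induction on $u$, the inductive step being a single application of the intersecting property in the spirit of Lemma~\ref{lemma:shat_from_s}. We may assume $u \le k$, since otherwise no $k$-element edge can contain $U$ and $d(U) = 0$. For the base case $u = k$: if $f \in \ff$ satisfies $f \supseteq U$, then $f = U$ because $|f| = |U| = k$, so $d(U) \le 1 = k^{k-k}$.

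For the inductive step, fix $U$ with $|U| = u < k$ and assume that $d(U') \le k^{k-u-1}$ for every $U' \subseteq X$ of size $u+1$. Since $\tau(\ff) = k > |U|$, the set $U$ is not a cover of $\ff$, so there is an edge $g \in \ff$ with $g \cap U = \emptyset$. Any $f \in \ff$ with $f \supseteq U$ meets $g$ (as $\ff$ is intersecting), hence contains some $y \in g$; since $g \cap U = \emptyset$ we have $y \notin U$, so $f \supseteq U \cup \{y\}$ with $|U \cup \{y\}| = u+1$. Therefore every edge counted by $d(U)$ is counted by $d(U \cup \{y\})$ for at least one $y \in g$, and since $|g| = k$,
\begin{align}
	d(U) \le \sum_{y \in g} d(U \cup \{y\}) \le k \cdot k^{k-u-1} = k^{k-u},\nonumber
\end{align}
which completes the induction.

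I do not anticipate any real obstacle: the only subtle point is that the hypothesis $\tau(\ff) = k$ is used precisely to guarantee the existence of the edge $g$ disjoint from $U$ (the same device as in Lemma~\ref{lemma:shat_from_s}), and one must anchor the induction at $u = k$ and descend rather than ascend. Equivalently, one could dispense with the explicit induction and simply iterate Lemma~\ref{lemma:shat_from_s} with $\widetilde{\ff} = \ff$, starting from $S_u = U$ to produce a chain $S_u \subseteq S_{u+1} \subseteq \cdots \subseteq S_k$ with $d(S_k) \ge k^{-(k-u)} d(U)$, and then observe $d(S_k) \le 1$.
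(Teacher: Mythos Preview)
Your proof is correct and is essentially identical to the paper's own argument: both use downward induction on $u$ with base case $u=k$ giving $d(U)\le 1$, and in the inductive step pick an edge disjoint from $U$ (using $\tau(\ff)=k>u$) and sum $d(U\cup\{y\})$ over its $k$ vertices. The extra remark that one could instead iterate Lemma~\ref{lemma:shat_from_s} to reach a $k$-set $S_k$ with $d(S_k)\ge k^{-(k-u)}d(U)$ is a pleasant equivalent formulation.
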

	
	\begin{proof} 
		We induct on $u$. If $u=k$, then $d(U) \leq 1$.
		For $u < k$, choose $f \in \mathcal{F}$ such that $f \cap U = \emptyset$; 
		such an edge exists since $\tau(F)=k>u$.
		Making use of the fact that every edge containing $U$ must intersect $f$ and our inductive hypothesis,
		\begin{align}
			d(U) \leq \sum_{x \in f} d(U \cup\{x\}) \leq k \cdot k^{k-(u+1)} = k^{k-u}, \nonumber
		\end{align}
		completing the proof.
	\end{proof}
	For $U=\emptyset$, this yields
	\begin{align}
		r(k) \leq k^k, \label{equation:erdos_lovas}
	\end{align}
	as first proved by Erd\H{o}s and Lov\'asz in \cite{erdos_lovasz}. We now give another proof of \eqref{equation:erdos_lovas} in order to introduce some of the key ideas and notation that will be used in the proofs of Lemmas \ref{theorem:main} and \ref{theorem:max_deg}.
	
	\begin{proof}[Guesser-Chooser proof of equation \eqref{equation:erdos_lovas}.]
		We consider a game played between a Chooser and a Guesser. The game is played on a fixed hypergraph $\mathcal{F}$, which is known to both players. The Chooser selects and edge $e \in \mathcal{F}$ which is not revealed to Guesser. Guesser then ask a sequence of question $\Omega_1, \Omega_2, \dots, \Omega_k$ to gain information about the edge $e$. Each question $\Omega_i$ must have a unique answer $\omega_i \in [k]$. If Guesser can always determine the edge $e$ after asking $k$ such question, it follows that $|\mathcal{F}| \leq k^k$. Equivalently, this can be thought of as creating an injection from $\mathcal{F}$ to the set of all sequences of the form $\omega_1, \omega_2, \dots, \omega_k$ where $\omega_i \in [k]$ for all $i \in [k]$.
		
		We remark that in this game, the questions Guesser asks may depend on the answers to the previous questions, but can not depend on knowledge of the edge $e$ that is not available to Guesser.
		
		We now describe such a $k$ question strategy for Guesser. Guesser first selects an arbitrary edge $e_1 \in \mathcal{F}$ and fixes an arbitrary labeling $e_1 = \{e_1^1,e_2^1, \dots, e_k^1 \}$. Question $\Omega_1$ asks for least $\omega_1$ such that $e_{\omega_1}^1 \in e$; indeed, since $\mathcal{F}$ is a $k$-uniform intersecting family, there is a unique answer $\omega_1 \in [k]$. Hence, our first question identifies one vertex $e^1_{\omega_1} \in e$.
		
		More generally, question $\Omega_i$ is determined as follows. Guesser selects an edge $e_i \in \mathcal{F}$ that does not intersect $ \{e_{\omega_1}^1, e_{\omega_2}^2, \dots e_{\omega_{i-1}}^{i-1} \}$, which exists since $\tau(\mathcal{F}) = k$. Guesser then fixes an arbitrary labeling $e_i = \{e_1^i,e_2^i, \dots, e_k^i \}$. Question $\Omega_i$ asks for the least $\omega_i$ such that $e^i_{\omega_i} \in e$. Hence, after $k$ questions are asked, Guesser has determined
		$ e = \{e_{\omega_1}^1, e_{\omega_2}^2, \dots e_{\omega_{k}}^{k} \}$.
	\end{proof}

	\section{Proof of Lemma \ref{theorem:main}} \label{section:main}

	Let $\mathcal{F}$ be a $k$-uniform intersecting family on $X$ with $\tau(\mathcal{F}) = k$. Let $x \in X$ with $d(x) \geq (\log k) k^{k-2}$. Let
	\begin{align}
		t:= \lfloor \log k \rfloor. \nonumber
	\end{align}
	
	To show $|\{f \in \ff : f \not \ni x \}| \leq k^{k-1}$, we will make use of the Guesser-Chooser game introduced in the Guesser-Chooser proof of Equation \eqref{equation:erdos_lovas}. Chooser will select and edge $e \in \mathcal{F}$ with $e \not \ni x$ and then Guesser will ask a sequence of $k$ questions $\Omega_1, \Omega_2, \dots, \Omega_k$ that will yield corresponding answers $\omega_1, \omega_2, \dots, \omega_k$ with $\omega_i \in [k]$ for all $i \in [k]$. Unlike the previous proof, Guesser will now choose his questions so that the first $t$ answers form a non-decreasing sequence, that is
	\begin{align}
		\omega_1 \leq  \omega_2 \leq \dots \leq \omega_t. \nonumber
	\end{align}
	
	The key idea to our proof is that for $i \in [t]$, after having asked questions $\Omega_1, \Omega_2, \dots, \Omega_i$ and received answers $\omega_1,  \omega_2, \dots , \omega_i,$ Guesser will have determined
	\begin{itemize}
		\item a set $V_i \subset e$ with $|V_i|=i$,
		\item a set $U_i \subset X \setminus e$ with $|U_i|= \omega_i-1$, and
		\item a collection of edges $\mathcal{F}_{i} := \{f \in \mathcal{F}: f \supseteq U_{i} \text{ and } f \cap V_i = \emptyset\}$ with
		\begin{align} \label{eq:size_of_fi}
			|\mathcal{F}_i| \geq (\log k-i) k^{k-\omega_i}.
		\end{align}
	\end{itemize}
	
	We will say that the sets $V_i$ and $U_i$ exhibit property $\mathcal{P}_i$ if all three of the above criteria are satisfied. 
	Let $V_0 := \emptyset$, let $U_0 := \{x\}$, and let $\omega_0 := 2$. It follows that $|\mathcal{F}_0| = d(x) \geq (\log k) k^{k-2}$. Observe that Guesser knows that $V_0$ and $U_0$ exhibit property $\mathcal{P}_0$.
	\begin{claim}\label{claim:1}
		Let $i \in [t]$. Given sets $V_{i-1}$ and $U_{i-1}$ exhibiting  $\mathcal{P}_{i-1}$, Guesser can ask a question $\Omega_i$ whose answer $\omega_i$ will determine sets $V_i$ and $U_i$ exhibiting property  $\mathcal{P}_i$. Moreover, Guesser can guarantee that $\omega_i \geq \omega_{i-1}$.
	\end{claim}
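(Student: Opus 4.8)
The plan is for Guesser to build $\Omega_i$ out of a vertex sequence $y_{\omega_{i-1}},y_{\omega_{i-1}+1},\dots,y_k$ produced greedily from $\ff_{i-1}$, and to let the answer $\omega_i$ record the first position at which this sequence meets $e$: the terms $y_{\omega_{i-1}},\dots,y_{\omega_i-1}$ will lie outside $e$ (and be adjoined to $U_{i-1}$), while $y_{\omega_i}$ will lie in $e$ (and be adjoined to $V_{i-1}$). Recall from $\mathcal{P}_{i-1}$ that $|U_{i-1}|=\omega_{i-1}-1$ and $|\ff_{i-1}|\ge(\log k-i+1)k^{k-\omega_{i-1}}$. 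First I would set $W_{\omega_{i-1}-1}:=U_{i-1}$, so $d_{\ff_{i-1}}(W_{\omega_{i-1}-1})=|\ff_{i-1}|$ (every edge of $\ff_{i-1}$ contains $U_{i-1}$). Then, for $j=\omega_{i-1},\omega_{i-1}+1,\dots,k-1$ in turn, apply Lemma~\ref{lemma:shat_from_s} with subfamily $\ff_{i-1}$ and set $W_{j-1}$ to obtain $y_j$ with $W_j:=W_{j-1}\cup\{y_j\}$ of size $j$ and $d_{\ff_{i-1}}(W_j)\ge k^{-1}d_{\ff_{i-1}}(W_{j-1})$, so that $d_{\ff_{i-1}}(W_j)\ge k^{-(j-\omega_{i-1}+1)}|\ff_{i-1}|$ at every stage. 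In particular $d_{\ff_{i-1}}(W_{k-1})\ge k^{-(k-\omega_{i-1})}|\ff_{i-1}|\ge\log k-i+1\ge 1$, so some edge of $\ff_{i-1}$ has the form $W_{k-1}\cup\{y_k\}$, and I take that $y_k$ as the last term. Each $y_j$ depends only on $\ff_{i-1}$ and the earlier $y$'s, never on $e$, so the whole sequence is available to Guesser; moreover the $y_j$ are distinct, lie outside $U_{i-1}$, and lie outside $V_{i-1}$ (otherwise the relevant $d_{\ff_{i-1}}(W_j)$ would vanish, contradicting the bounds above).

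Guesser then asks, as $\Omega_i$, for the least $j\in[\omega_{i-1},k]$ with $y_j\in e$; I would let $\omega_i$ be the answer and set $U_i:=W_{\omega_i-1}$, $V_i:=V_{i-1}\cup\{y_{\omega_i}\}$, and $\ff_i:=\{f\in\ff:f\supseteq U_i,\ f\cap V_i=\emptyset\}$. This question has a unique answer in $[k]$: if $y_j\notin e$ for every $j\in[\omega_{i-1},k-1]$ then $W_{k-1}\cap e=\emptyset$ (since $U_{i-1}\subseteq X\setminus e$), and the edge $W_{k-1}\cup\{y_k\}\in\ff_{i-1}\subseteq\ff$, which must meet $e$, then forces $y_k\in e$. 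By construction $\omega_i\ge\omega_{i-1}$; from $y_{\omega_{i-1}},\dots,y_{\omega_i-1}\notin e$ we get $U_i\subseteq X\setminus e$ with $|U_i|=\omega_i-1$; and from $y_{\omega_i}\in e\setminus V_{i-1}$ we get $V_i\subseteq e$ with $|V_i|=i$.

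It remains to check the size condition of $\mathcal{P}_i$. Since $U_{i-1}\subseteq U_i$ and $V_{i-1}\subseteq V_i$, one has $\ff_i=\{f\in\ff_{i-1}:f\supseteq W_{\omega_i-1},\ y_{\omega_i}\notin f\}$, hence $|\ff_i|=d_{\ff_{i-1}}(W_{\omega_i-1})-d_{\ff_{i-1}}(W_{\omega_i})$, where $W_{\omega_i}:=W_{\omega_i-1}\cup\{y_{\omega_i}\}$ has size $\omega_i$. Iterating Lemma~\ref{lemma:shat_from_s} from $U_{i-1}$ and using the lower bound on $|\ff_{i-1}|$ gives $d_{\ff_{i-1}}(W_{\omega_i-1})\ge k^{-(\omega_i-\omega_{i-1})}|\ff_{i-1}|\ge(\log k-i+1)k^{k-\omega_i}$, while Lemma~\ref{lemma:min_d(u)} gives $d_{\ff_{i-1}}(W_{\omega_i})\le d(W_{\omega_i})\le k^{k-\omega_i}$. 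Subtracting yields $|\ff_i|\ge(\log k-i)k^{k-\omega_i}$, so $U_i$ and $V_i$ exhibit $\mathcal{P}_i$.

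The step I expect to be the main obstacle, and the reason for singling out the last term $y_k$, is guaranteeing that the answer $\omega_i$ actually falls in $[k]$. Extending by Lemma~\ref{lemma:shat_from_s} all the way to $W_k$ would only give $d_{\ff_{i-1}}(W_k)\ge(\log k-i+1)/k$, which may be $0$, so one could not conclude that $W_k$ is an edge forced to meet $e$. Choosing $y_k$ to be a vertex that genuinely completes $W_{k-1}$ to an edge of $\ff_{i-1}$ forces the sequence to meet $e$ by position $k$ at the cost of discarding a single edge from the count — exactly the loss already absorbed in passing from the factor $\log k-i+1$ to $\log k-i$.
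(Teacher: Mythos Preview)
Your proof is correct and is essentially the paper's own argument: extend $U_{i-1}$ greedily via Lemma~\ref{lemma:shat_from_s}, ask for the first position at which the resulting sequence meets $e$, and bound $|\ff_i|$ as the greedy lower bound on $d_{\ff_{i-1}}(W_{\omega_i-1})$ minus the Lemma~\ref{lemma:min_d(u)} upper bound on $d(W_{\omega_i})$. Your special treatment of $y_k$ is harmless but unnecessary: the paper applies Lemma~\ref{lemma:shat_from_s} all the way through $j=k$ and observes (using $i\le t$) that $d_{\ff_{i-1}}(S^i_k)\ge(\log k-i+1)/k>0$, which, being an integer, must then be $\ge 1$, so $S^i_k$ is itself an edge of $\ff_{i-1}$ --- your worry that this quantity ``may be $0$'' is unfounded.
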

	\begin{proof}
		We will specify an edge $e_i = \{ e^i_1, e^i_2, \dots, e^i_k \}$. Question $\Omega_i$ will then ask for the least $\omega_i$ such that $e^i_{\omega_i} \in e$.
		
		Fix a labeling $U_{i-1} = \{ u_1, \dots, u_{\omega_{i-1}-1} \}$. For $j \in [\omega_{i-1}-1]$, take $e_j^i := u_j$. This will ensure that $\omega_i \geq \omega_{i-1}$ as desired, since $U_{i-1} \cap e = \emptyset$.
		
		Let $S^i_{\omega_i-1} := U_{i-1}$. We now proceed recursively as follows: for $j \in [\omega_{i-1},k]$, apply Lemma~\ref{lemma:shat_from_s} to $S^i_{j-1}$ with respect to $\mathcal{F}_{i-1}$ to obtain $S^i_{j} = S^i_{j-1} \cup \{e^i_{j}\}$. For $j \in [\omega_{i-1},k]$, this yields sets $S^i_{j}$ with
		\begin{align}\label{eq:d1}
			d_{\ff_{i-1}} (S^i_j) \geq  k^{-j+\omega_{i-1}-1} d_{\ff_{i-1}} (S^i_{\omega_i-1}) = k^{-j+\omega_{i-1}-1}| \mathcal{F}_{i-1}|. 
		\end{align}
		By~\eqref{eq:size_of_fi},
		\begin{align}\label{eq:d2}
			k^{-j+\omega_{i-1}-1}| \mathcal{F}_{i-1}| \geq  (\log k - i+1) k^{k-j-1} .
		\end{align}
		It follows from \eqref{eq:d1} and \eqref{eq:d2} that for $j \in [\omega_{i-1},k]$,
		\begin{align}\label{eq:d3}
			d_{\ff_{i-1}} (S^i_j) \geq  (\log k -i +1) k^{k-j-1}.
		\end{align}
		Now  making use of $i \leq t$, from \eqref{eq:d3} we have that $d_{\ff_{i-1}} (S^i_k) > 0$. From the definition of $\mathcal{F}_{i-1}$, it now follows that $S^i_{k} \cap V_{i-1} = \emptyset$. Hence, $e_j^i \not \in V_{i-1}$ for all $j \in [k]$.
		
		Having completed our construction of $e_i$, we now consider the answer $\omega_i$ to question $\Omega_i$. Define
		\begin{align}
			V_i := V_{i-1} \cup \{e^{i}_{\omega_i} \} \quad \quad \text{and} \quad \quad U_i :=\{ e^i_1, e^i_2, \dots, e^{i}_{\omega_i-1} \}. \nonumber
		\end{align}
		Observe that $\mathcal{F}_i$ is precisely the edges in $\mathcal{F}_{i-1}$ that contain $U_i=S^i_{\omega_i-1}$ and do not contain $e^{i}_{\omega_i}$. Making use of \eqref{eq:d3} and Lemma~\ref{lemma:min_d(u)},
		\begin{align}
			|\mathcal{F}_i| &\geq d_{\ff_{i-1}} (S^i_{\omega_i-1}) - d_{\ff}(S^i_{\omega_i} \cup \{e^{i}_{\omega_i}\}) \nonumber \\
			&\geq (\log k - i +1 )k^{k-\omega_i} - k^{k-\omega_i} \nonumber \\
			&=  (\log k-i)k^{k-\omega_i}. \nonumber
		\end{align}
		Thus, we have shown that $V_i$ and $U_i$ exhibit property $\mathcal{P}_i$.
	\end{proof}
	
	It follows from Claim \ref{claim:1} that Guesser may ask questions, $\Omega_1, \Omega_2 ,\dots,  \Omega_t$ that necessitate a non-decreasing sequence of answers $\omega_1,\omega_2 ,\dots,\omega_t$. Moreover, after asking these questions, Guesser will have determined $V_t \subset e$ with $|V_t|=t$. For the remaining $k-t$ questions, Guesser will no longer ask questions that necessitate a non-decreasing sequence.
	
	\begin{claim}\label{claim:2}
		Let $i \in (t,k]$. Given a set $V_{i-1} \subseteq e$ with $|V_{i-1}|=i-1$, Guesser can ask a question $\Omega_i$ whose answer $\omega_i$ will allow Guesser to determine a set $V_i \subseteq e$  with $|V_{i}|=i$.
	\end{claim}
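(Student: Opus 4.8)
The plan is to repeat, essentially verbatim, the one-step Guesser move from the Guesser-Chooser proof of \eqref{equation:erdos_lovas}. For $i \in (t,k]$ there is no longer any monotonicity constraint on the answer $\omega_i$, so nothing beyond the original Erd\H{o}s--Lov\'asz argument is needed; all the delicate bookkeeping was already done in Claim~\ref{claim:1}, and this range of questions only contributes the routine ``$k^{k-t}$ unconstrained answers'' part of the count.

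In detail, I would argue as follows. Suppose Guesser has determined a set $V_{i-1} \subseteq e$ with $|V_{i-1}| = i-1$. Since $i-1 \le k-1 < k = \tau(\mathcal{F})$, there is an edge $e_i \in \mathcal{F}$ with $e_i \cap V_{i-1} = \emptyset$; Guesser selects such an $e_i$ (its choice depends only on $V_{i-1}$, which Guesser knows, and not on $e$) and fixes an arbitrary labeling $e_i = \{e^i_1, \dots, e^i_k\}$. Because $\mathcal{F}$ is intersecting, $e \cap e_i \neq \emptyset$, so there is a least index $\omega_i \in [k]$ with $e^i_{\omega_i} \in e$; this is a legal question $\Omega_i$ since its answer lies in $[k]$ and is uniquely determined. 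Guesser then sets $V_i := V_{i-1} \cup \{e^i_{\omega_i}\}$. As $e^i_{\omega_i} \in e_i$ and $e_i \cap V_{i-1} = \emptyset$, the vertex $e^i_{\omega_i}$ is not in $V_{i-1}$, so $|V_i| = i$; and $V_i \subseteq e$ because $V_{i-1} \subseteq e$ and $e^i_{\omega_i} \in e$. This proves Claim~\ref{claim:2}.

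Finally I would assemble Lemma~\ref{theorem:main} from the two claims. Iterating Claim~\ref{claim:1} for $i \in [t]$ and then Claim~\ref{claim:2} for $i \in (t,k]$, after all $k$ questions Guesser has a set $V_k \subseteq e$ with $|V_k| = k$, hence $e = V_k$; so Guesser recovers $e$ from the answer sequence, and the number of edges $e \not\ni x$ is at most the number of possible sequences $(\omega_1,\dots,\omega_k)$. These satisfy $2 = \omega_0 \le \omega_1 \le \dots \le \omega_t \le k$ with $\omega_{t+1}, \dots, \omega_k \in [k]$ arbitrary, so
\begin{align}
|\{f \in \mathcal{F} : f \not\ni x\}| \le \binom{k+t-2}{t}\, k^{k-t}. \nonumber
\end{align}
The only genuine estimate left is to check that the right-hand side is $o(k^{k-1})$: bounding $\binom{k+t-2}{t} \le (k+t-2)^t/t! = (1+o(1))\,k^t/t!$ gives a bound of $(1+o(1))\,k^k/t!$, and with $t = \lfloor \log k \rfloor$ Stirling's formula yields $\log(t!) \sim (\log k)(\log\log k)$, which grows faster than $\log k$, so $k^k/t! = o(k^{k-1})$. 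That counting estimate is the only place where any care is needed; Claim~\ref{claim:2} itself is immediate from the covering-number and intersecting hypotheses.
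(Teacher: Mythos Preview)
Your proof of Claim~\ref{claim:2} is correct and is essentially identical to the paper's: pick any edge $e_i$ disjoint from $V_{i-1}$ (possible since $\tau(\mathcal{F})=k>i-1$), label it, ask for the least index in $e$, and adjoin that vertex to $V_{i-1}$. Your supplementary assembly of Lemma~\ref{theorem:main} also matches the paper's, with the harmless refinement that you record $\omega_1\ge\omega_0=2$ to get $\binom{k+t-2}{t}$ rather than the paper's $\binom{k+t-1}{t}$; the asymptotics are unaffected.
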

	\begin{proof}
		Let $e_i$ be any edge not covered by $V_{i-1}$; such an edge exists since $\tau(\mathcal{F}) > i-1$. Arbitrarily label $e_i = \{ e^i_1, e^i_2, \dots, e^i_k \}$. Question $\Omega_i$ asks for the least $\omega_i$ such that $e^i_{\omega_i} \in e$. Let $V_{i}:= V_{i-1} \cup \{e^i_{\omega_i}\}$.
	\end{proof}
	
	Hence, after $k$ questions are asked, Guesser will have determined $e = V_k$. Since the first $t$ answers are non-decreasing and the number of non-decreasing sequences in $[k]^t$ is ${ \binom{k+t-1}{t} }$, this gives that
	\begin{align}
		|\{f \in \ff : f \not \ni x \}| &\leq  { \binom{k+t-1}{t}} k^{k-t} \nonumber \\
		&\leq \left(\frac{e(k+t-1)}{t}\right)^t k^{k-t}  \nonumber \\
		&\leq \left( \frac{e}{t} \left( 1+\frac{t-1}{k} \right) \right)^t k^k \nonumber \\
		&\leq \left( \frac{2e}{t} \right)^t k^k \leq  \left( \frac{2e}{\log k -1} \right)^{\log k - 1} k^k  \leq k^{k- (1-o(1))\log \log k } = o(k^{k-1}). \nonumber
	\end{align} 
	This completes the proof of Lemma~\ref{theorem:main}.
	
	\section{Proof of Lemma \ref{theorem:max_deg}}\label{section:theorem:best}
	
	Let $\mathcal{F}$ be a $k$-uniform intersecting family on $X$ with $\tau(\mathcal{F}) = k$. Suppose that $\Delta(\ff) \leq |\ff| / 40 \alpha \log k$. To prove Lemma \ref{theorem:max_deg}, it suffices to prove that if $|\ff| > 2k^{2k/3}$, then $|\ff| \leq ek^{k-\alpha}$. Hence, we assume that  $|\ff| > 2k^{2k/3}$.
	
	Let 
	\begin{align}
		t := 20 \lfloor \alpha \log k \rfloor.
	\end{align}
	
	As in the proofs of Equation~\eqref{equation:erdos_lovas} and Theorem~\ref{theorem:main}, we will make use of the Guesser and Chooser game. As before, Chooser will select and edge $e \in \mathcal{F}$ and then Guesser will ask a sequence of $k$ questions $\Omega_1, \Omega_2, \dots, \Omega_k$ that will yield corresponding answers $\omega_1, \omega_2, \dots, \omega_k$ with $\omega_i \in [k]$ for all $i \in [k]$. Unlike the previous two proofs, Guesser will now choose his questions so that
	\begin{align} 
		\omega_i >  2k/3 \implies  \omega_{i+1} >  k/3 \text{\quad for all odd $i \in [t]$}. \label{equation:new_condition}
	\end{align}
	
	Let $V_0 := \emptyset$. The following claim establishes that Guesser can ask his first $t$ questions so that \eqref{equation:new_condition} is satisfied.
	
	\begin{claim}\label{claim:new_1}
		Let $i \in [t]$ be an odd number. Given a set $V_{i-1} \subset e$ with $|V_{i-1}|=i-1$, Guesser can ask a pair of questions question $\Omega_{i}$ and $\Omega_{i+1}$ whose answers will determine a set $V_{i+1} \subset e$ with $|V_{i+1}|=i+1$. Moreover, these questions can be asked so that $\omega_{i} > 2k/3$ implies that $\omega_{i+1} > k/3$.
	\end{claim}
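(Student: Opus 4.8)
The plan is to split on the size of $\omega_i$, since the implication in \eqref{equation:new_condition} is vacuous unless $\omega_i > 2k/3$. In the vacuous regime $\omega_i \le 2k/3$, the claim asks nothing beyond extracting two new vertices of $e$, which is exactly the mechanism of the Guesser--Chooser proof of \eqref{equation:erdos_lovas} and of Claim~\ref{claim:2}: Guesser picks any edge $e_i$ disjoint from $V_{i-1}$ (such an edge exists because $\tau(\ff)=k>i-1$), labels it arbitrarily, lets $\Omega_i$ ask for the least $\omega_i$ with $e^i_{\omega_i}\in e$, and sets $v_i:=e^i_{\omega_i}$, $V_i:=V_{i-1}\cup\{v_i\}$; Guesser then repeats with an edge $e_{i+1}$ disjoint from $V_i$ to obtain $v_{i+1}$ and $V_{i+1}:=V_i\cup\{v_{i+1}\}$. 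Since $e_i\cap V_{i-1}=\emptyset$ and $e_{i+1}\cap V_i=\emptyset$, both vertices produced are new, so $|V_{i+1}|=i+1$. Thus all the real work is in the regime $\omega_i>2k/3$.

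So suppose $\omega_i > 2k/3$. The moment Guesser learns $\omega_i$, he knows that the prefix $W:=\{e^i_1,\dots,e^i_{\omega_i-1}\}$ of $e_i$ is a set of $\omega_i-1\ge\lfloor 2k/3\rfloor$ vertices disjoint from $e$. The plan is then to produce an edge $g\in\ff$ with $g\cap V_i=\emptyset$ and $|g\cap W|\ge\lceil k/3\rceil$: taking $e_{i+1}:=g$ and choosing the labelling of $e_{i+1}$ so that the vertices of $g\cap W$ occupy its first $|g\cap W|$ positions forces $\omega_{i+1}>|g\cap W|\ge k/3$, because those positions lie outside $e$, while $e_{i+1}\cap V_i=\emptyset$ still guarantees that $v_{i+1}:=e^{i+1}_{\omega_{i+1}}$ is a new vertex of $e$. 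Hence the whole claim reduces to exhibiting such a $g$ for every possible value $\omega_i>2k/3$.

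Exhibiting $g$ is the step I expect to be the main obstacle, and it is where all three standing hypotheses enter. On the favourable side, at most $i\,\Delta(\ff)\le t\,\Delta(\ff)\le|\ff|/2$ edges meet $V_i$, so at least $|\ff|/2>k^{2k/3}$ edges remain as candidates for $g$; moreover $W$ is a constant fraction of $k$ and, being smaller than $\tau(\ff)=k$, is not a cover. The difficulty is that for an arbitrarily chosen $e_i$ the prefix $W$ could be met in fewer than $k/3$ vertices by every candidate edge, so Guesser must commit --- before asking $\Omega_i$ --- to an edge $e_i$ and a labelling whose first $\lfloor 2k/3\rfloor$ vertices $W_0$ are ``popular'': heavily overlapping many other edges of $\ff$, and in particular many edges disjoint from $V_{i-1}$ and from the single as-yet-unknown vertex $v_i$. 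The quantitative heart of the proof will be to show such an $e_i$ exists and that it works for every $\omega_i>2k/3$ (for which necessarily $W\supseteq W_0$), by bounding the number of ``bad'' edges --- those disjoint from $V_i$ meeting $W$ in fewer than $k/3$ vertices --- against the $>k^{2k/3}$ candidates, using Lemma~\ref{lemma:min_d(u)} and the spread-out hypothesis $\Delta(\ff)\le|\ff|/(40\alpha\log k)$. I expect this counting, together with the calibration of the $2/3$--$1/3$ split against the constants $20$ and $40$, to be the delicate point; once $g$ is in hand, assembling $\Omega_{i+1}$ and $V_{i+1}$ is immediate.
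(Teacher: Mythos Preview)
Your high-level plan matches the paper's: commit to a carefully labelled $e_i$ in advance, learn $\omega_i$, and if $\omega_i>2k/3$ exploit the long prefix $W$ disjoint from $e$ to force $\omega_{i+1}>k/3$. You also correctly identify that $|\ff_{i-1}|\ge|\ff|-i\,\Delta(\ff)\ge|\ff|/2>k^{2k/3}$ is the pool of candidates for $e_{i+1}$. What is missing is the actual construction of $e_i$, and your proposed substitute --- a counting of ``bad'' edges via Lemma~\ref{lemma:min_d(u)} and the degree bound --- is not how the paper proceeds and is not obviously workable as stated, because for a generic $e_i$ there is no reason any edge of $\ff_{i-1}$ should meet its prefix in $k/3$ vertices.

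The paper's construction is explicit and uses Lemma~\ref{lemma:shat_from_s} (which you do not invoke) rather than a counting argument. Guesser builds the \emph{first} $\lfloor k/3\rfloor$ positions of $e_i$ greedily: starting from $S_0=\emptyset$ and repeatedly applying Lemma~\ref{lemma:shat_from_s} inside $\ff_{i-1}$, he obtains a set $S:=S^i_{\lfloor k/3\rfloor}$ with $d_{\ff_{i-1}}(S)\ge k^{2k/3-\lfloor k/3\rfloor}\ge k^{k/3}$. Now let $P_i$ be the set of vertices outside $S$ lying in \emph{every} edge of $\ff_{i-1}$ that contains $S$; by Lemma~\ref{lemma:min_d(u)}, $k^{k/3}\le d_{\ff_{i-1}}(S)= d_{\ff_{i-1}}(S\cup P_i)\le k^{\,k-\lfloor k/3\rfloor-|P_i|}$, whence $|P_i|\le k/3$. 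Guesser then takes $e_i$ to be any edge of $\ff_{i-1}$ containing $S$, labels its first $\lfloor k/3\rfloor$ positions as $S$ and its next $\lfloor k/3\rfloor$ positions to include $P_i$. If $\omega_i>2k/3$ then $v_i:=e^i_{\omega_i}\notin P_i$, so by definition of $P_i$ there exists $e_{i+1}\in\ff_{i-1}$ with $S\subseteq e_{i+1}$ and $v_i\notin e_{i+1}$; labelling $S$ into its first $\lfloor k/3\rfloor$ positions forces $\omega_{i+1}>\lfloor k/3\rfloor$, and $e_{i+1}\cap V_i=\emptyset$ since $e_{i+1}\in\ff_{i-1}$ and $e_{i+1}\not\ni v_i$. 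The point is not that many edges overlap the $2k/3$-prefix substantially, but that many edges \emph{contain the fixed $k/3$-set $S$ entirely}, and the $P_i$ trick guarantees one of them avoids whichever $v_i$ is revealed. Your outline was heading in the right direction but stopped short of this construction.
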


	\begin{proof}
		Let $i \in [t]$ be an odd number and $V_{i-1} \subset e$ with $|V_{i-1}|=i-1$. Let 
		$$\mathcal{F}_{i-1} := \{f \in \ff : f \cap V_{i-1} = \emptyset \}.$$ 
		It follows that
		$$|\mathcal{F}_{i-1}| \geq |\ff| - \Delta(\ff) \cdot  (i-1) \geq |\ff|/2 \geq k^{2k/3}.$$
		
		We now construct a testing edge $e_{i} = \{ e^{i}_1, e^{i}_2, \dots, e^{i}_k \}$. We begin by specifying the first $\lfloor k/3 \rfloor$ vertices in $e_{i}$ as follows.  Let $S^{i}_0 := \emptyset$. We now proceed recursively: for $j \in [ \lfloor k/3 \rfloor ]$, apply Lemma~\ref{lemma:shat_from_s} to $S^{i}_{j-1}$ with respect to $\mathcal{F}_{i-1}$ to obtain $S^{i}_{j} = S^{i}_{j-1} \cup \{e^{i}_{j}\}$. This yields sets $S^{i}_{j}$ with
		\begin{align}\label{eq:new_d1}
			d_{\ff_{i-1}} (S^{i}_j) \geq k^{2k/3 - j}. 
		\end{align}
		
		Having specified the first $\lfloor k/3 \rfloor$ vertices in $e_{i+1}$, we will now work to specify the remaining vertices. To this end, let $D_{\ff_{i-1}}(S^i_{\lfloor k/3 \rfloor}) := \{f \in \ff_{i-1} : f \supseteq S^i_{ \lfloor k/3 \rfloor }\}$. Define 
		\begin{align}
			P_i:= \{x \in X \setminus S^i_{\lfloor k/3 \rfloor} : x \in f \text{ for all } f \in D_{\ff_{i-1}}(S^{i}_{ \lfloor k/3 \rfloor }) \}. \label{def:pi}
		\end{align}
		It follows from \eqref{eq:new_d1}, \eqref{def:pi}, and Lemma \ref{lemma:min_d(u)} that
		\begin{align}
			k^{2k/3 - \lfloor k/3 \rfloor } \leq d_{\ff_{i-1}}(S^{i}_{ \lfloor k/3 \rfloor }) =  d_{\ff_{i-1}}(S^i_{ \lfloor k/3 \rfloor} \cup P_i) \leq k^{k- \lfloor k/3 \rfloor -|P_i|}. \label{equation:size_of_p}
		\end{align}
		The inequality in \eqref{equation:size_of_p} establishes that $|P_i| \leq k/3$. We now take $e_{i+1}$ to be any edge in $D_{\ff_{i-1}}(S^{i}_{\lfloor k/3 \rfloor})$; such an edge is guaranteed to exists since the set in non-empty by \eqref{equation:size_of_p}. Label  
		$$e_{i} = \{ e^{i}_1, e^{i}_2, \dots, e^{i}_k \}$$
		so that $\{ e^{i}_1, e^{i}_2, \dots, e^{i}_{ \lfloor k/3 \rfloor } \} = S^i_{ \lfloor k/3 \rfloor }$ and $P_i \subseteq  \{ e^{i}_{ \lfloor k/3 \rfloor +1}, e^{i}_{ \lfloor k/3 \rfloor+2}, \dots, e^{i}_{ \lfloor 2k/3 \rfloor} \}$.
		
		The question $\Omega_{i}$ asks for the least integer $\omega_{i}$ such that $e^{i}_{\omega_{i}} \in e$. Let $V_{i} := V_{i-1} \cup \{ e^{i}_{\omega_{i}} \}$. We now consider two cases depending upon the answer $\omega_i$.
		
		If $\omega_{i} > 2k/3$, then Guesser must ensure that the answer $\omega_{i+1}$ to the next question will satisfy $\omega_{i+1} \geq k/3$. Observe that since $\omega_{i} > 2k/3$, it follows that $S^{i}_{\lfloor k/3 \rfloor } \cap e = \emptyset$. Also, since $\omega_{i} > 2k/3$, we have $e^{i}_{\omega_{i}} \not \in P_i$. Hence, by the definition of $P_i$ (see \eqref{def:pi}), there exists an edge $e_{i+1} \in D_{\ff_{i-1}}(S^i_{\lfloor k/3 \rfloor })$ with $e_{i+1} \not \ni  e^{i}_{\omega_{i}}$. Label the vertices of this edge
		$$e_{i+1} = \{ e^{i+1}_1, e^{i+1}_2, \dots, e^{i+1}_k \}$$ 
		so that $\{ e^{i+1}_1, e^{i+1}_2, \dots, e^{i+1}_{\lfloor k/3 \rfloor} \} = S^i_{ \lfloor k/3 \rfloor}$. It follows that $e_{i+1} \cap V_{i} = \emptyset$. The answer to question $\Omega_{i+1}$ (based upon the testing edge $e_{i+1}$) will identify a new vertex in $e$ and necessitate an answer $\omega_{i+1} \geq k/3$.
		
		In the complementary case $\omega_{i+1} \leq 2k/3$, the question $\Omega_{i+1}$ must identify a new vertex in $e$ and the answer $\omega_{i+1}$ can be any integer in $[k]$. To accomplish this, Guesser takes the testing edge $e_{i+1}$ to be any any edge that does not intersect $V_{i}$; such an edge exists since $\tau(\ff) = k$.
		
	\end{proof}
	
	By Claim \ref{claim:1}, Guesser may ask questions, $\Omega_1, \Omega_2 ,\dots,  \Omega_t$ that necessitate a sequence of answers $\omega_1,\omega_2 ,\dots,\omega_t$ satisfying property \eqref{equation:new_condition}. Moreover, after asking these questions, Guesser will have determined $V_t \subset e$ with $|V_t|=t$. For the remaining $k-t$ questions, Guesser will only require that each answer is a numbers in $[k]$ that identifies a new vertex in $e$. This is possible by Claim \ref{claim:2}.
	
	Hence, after $k$ questions are asked, Guesser will have determined the edge $e$ selected by Chooser. It follows that the size of $|\ff|$ is bounded above by the number of sequence  $\omega_1, \omega_2, \dots, \omega_k \in  [k]^k$ that satisfy property \eqref{equation:new_condition}. Because the number of ways to select a pair $\omega_i,\omega_{i+1}  \in [k]$ with the condition in \eqref{equation:new_condition} is less than
	$$k^2 - (k/3-2)(k/3-2) = (8/9)k^2 + 4 k/3 - 4 < e^{-1/10} k^2$$
	for $k$ sufficiently large, it follows that
	$$|\ff| \leq \left( e^{-1/10} k^2 \right)^{t/2}k^{k-t} = e^{-t/20}k^k  \leq e k^{k- \alpha}$$
	for $k$ sufficiently large. This completes the proof of Lemma~\ref{theorem:max_deg}.

	\section{Acknowledgements}
	
	We would like to thank Peter Frankl and Vojt\v{e}ch R\"odl for sharing this problem with us.

	\begin{bibdiv}
		\begin{biblist}
			
			\bib{frankl}{article}{
				AUTHOR = {Frankl, P.},
				Author ={Ota, K.},
				Author ={Tokushige, N.},
				TITLE = {Covers in uniform intersecting families and a counterexample
					to a conjecture of {L}ov\'asz},
				JOURNAL = {J. Combin. Theory Ser. A},
				FJOURNAL = {Journal of Combinatorial Theory. Series A},
				VOLUME = {74},
				YEAR = {1996},
				NUMBER = {1},
				PAGES = {33--42},
				ISSN = {0097-3165},
				CODEN = {JCBTA7},
				MRCLASS = {05D05},
				MRNUMBER = {1383503},
			}
			\bib{erdos_lovasz}{article}{
				AUTHOR = {Erd{\H{o}}s, P.}
				author ={Lov{\'a}sz, L.},
				TITLE = {Problems and results on {$3$}-chromatic hypergraphs and some
					related questions},
				BOOKTITLE = {Infinite and finite sets ({C}olloq., {K}eszthely, 1973;
					dedicated to {P}. {E}rd{\H o}s on his 60th birthday), {V}ol.
					{II}},
				PAGES = {609--627. Colloq. Math. Soc. J\'anos Bolyai, Vol. 10},
				PUBLISHER = {North-Holland, Amsterdam},
				YEAR = {1975},
				MRCLASS = {05C15},
				MRNUMBER = {0382050},
				MRREVIEWER = {A. C.-F. Liu},
			}
			
			\bib{tuza}{article}{
				AUTHOR = {Tuza, Z.},
				TITLE = {Inequalities for minimal covering sets in set systems of given
					rank},
				NOTE = {2nd Twente Workshop on Graphs and Combinatorial Optimization
					(Enschede, 1991)},
				JOURNAL = {Discrete Appl. Math.},
				FJOURNAL = {Discrete Applied Mathematics. The Journal of Combinatorial
					Algorithms, Informatics and Computational Sciences},
				VOLUME = {51},
				YEAR = {1994},
				NUMBER = {1-2},
				PAGES = {187--195},
				ISSN = {0166-218X},
				CODEN = {DAMADU},
				MRCLASS = {05C65 (05D15)},
				MRNUMBER = {1279634},
				MRREVIEWER = {Martin Kochol},
			}
		\end{biblist}
	\end{bibdiv}
	
\end{document}